\numberwithin{equation}{section}
\newtheorem{theorem}[equation]{Theorem}
\newtheorem{lemma}[equation]{Lemma}
\theoremstyle{definition}
\theoremstyle{remark}
\newcommand\vol{\operatorname{vol}}
\DeclareMathOperator {\area} {area}
\DeclareMathOperator {\diam} {diam}
\DeclareMathOperator{\image}{\mathrm{image}}
\DeclareMathOperator{\lcs}{l.c.s.}
\begin{document}
\title{Mean curvature versus diameter and energy quantization}\subjclass{53C42}
\author{Yasha Savelyev}
\thanks {Partially supported by PRODEP grant of SEP, Mexico}
\email{yasha.savelyev@gmail.com}
\address{University of Colima, CUICBAS}
\keywords{}
\begin{abstract} We first partially extend a theorem of Topping, on the relation between mean curvature and intrinsic diameter, from immersed submanifolds of $\mathbb{R} ^{n} $ to almost everywhere immersed, closed submanifolds of a compact Riemannian manifold. We use this to prove quantization of energy for pseudo-holomorphic closed curves, of all genus, in a compact locally conformally symplectic manifold.   
% \\\\
%
% % \end{abstract}
% \begin{otherlanguage}{french} 
% Résumé. Premièrement, nous étendons partiellement un théorème de Topping, concernant la relation entre la courbure moyenne et le diamètre intrinsèque, à partir des sous-variétés immergées de $\mathbb {R} ^{n} $ aux sous-variétés presque partout immergées d'une variété Riemannienne compacte. Nous utilisons cette extension pour montrer la quantification de l’énergie pour les courbes fermées, pseudo-holomorphes, de tout genre dans une variété compacte, localement conformément symplectique.
% \end{otherlanguage}
\end{abstract}

 \maketitle
\section {Introduction}

 For a compact  almost complex manifold $(M, J)$ with metric $g$, energy quantization is a statement that there is a $\hbar (M, g, J)>0$ so that any non-constant pseudo-holomorphic map of a sphere into $M$ has energy at least $\hbar (M,g, J)$.

In order to carry out Gromov-Witten theory for locally conformally symplectic manifolds, or $\lcs$ manifolds for short, we need in general a stronger form of energy quantization of pseudo-holomorphic curves, because it may be necessary to work with trivial homology classes (cf. \cite{citeSavelyevConformalSymplectic}).  In this case, if we are working with a higher genus curve, the energy of the  curve a priori can collapse to 0, which would break compactness and invariance arguments. This is the ``black earth'' analogue of the ``blue sky catastrophe'' cf. \cite{citeShilnikovTuraevBlueSky}, \cite{citeFullerBlueSky}, \cite{citeSavelyevFuller}. We show that this cannot happen.

The classical energy quantization is proved using mean value inequalities for pseudo-holomorphic maps, \cite{citeMcDuffSalamon$J$--holomorphiccurvesandsymplectictopology}, \cite{citeKatrinQuantization}.
To prove quantization of energy for higher genus curves in a $\lcs$ manifold, we need more sophisticated differential geometry. In this note we first partially extend a theorem of Topping relating diameter and mean curvature of immersed submanifolds of $\mathbb{R} ^{n} $, and then use this to prove our quantization result.
\subsection {Mean curvature vs diameter}  In \cite{citeToppingRelatingDiameter} Topping gave via a concise but sophisticated argument, partially based on ideas of Ricci flows, a simple relation between intrinsic diameter and mean curvature, for immersed submanifold of $\mathbb{R} ^{n} $.  Let us state it here: \begin{theorem} [\cite{citeToppingRelatingDiameter}] 
  For $\Sigma ^{m} $ a smoothly immersed closed submanifold of $\mathbb{R} ^{n} $ we have:
\begin {equation*}    \diam (\Sigma)  \leq Const (m)
    \int _{\Sigma} |{\textbf{H}}| ^{m-1}  \, dvol,
 \end{equation*}  
for $\textbf{H} $ the mean curvature vector field along $\Sigma$, $vol$ the volume measure induced by the standard ambient metric, and $\diam$ the intrinsic diameter:  $$\max _{x,y \in \Sigma} dist_{(\Sigma, g _{st}) } 
   (x, y),$$  
for $g _{st}$ the metric on $\Sigma$ induced by the standard metric on $\mathbb{R} ^{n} $.
\end{theorem}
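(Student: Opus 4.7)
The plan is to combine a Simon-type local volume/mean-curvature estimate with a covering argument along a minimizing intrinsic geodesic, which is the strategy used in the original paper of Topping.

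First I would prove the local inequality: for each $x_0 \in \Sigma$ and each $r>0$ satisfying some mild largeness hypothesis (for instance that $\Sigma$ is not contained in $B_r(x_0)$),
\begin{equation*}
 r \leq C(m)\int_{\Sigma \cap B_r(x_0)} |\textbf{H}|^{m-1}\,d\mathrm{vol}.
\end{equation*}
The tool is the Simon monotonicity formula, which asserts that $\rho \mapsto \rho^{-m}\mathcal{H}^m(\Sigma \cap B_\rho(x_0))$ is monotone modulo an error controlled by $\int |\textbf{H}|$. In particular one recovers the density lower bound $\Theta^m(\Sigma,x_0)\geq 1$ at immersed points, and from the derivative form of the monotonicity one extracts a differential inequality relating the intrinsic radius to $\int|\textbf{H}|^{m-1}$ (via a Hölder step trading $|\textbf{H}|^{m}$ for $|\textbf{H}|^{m-1}$ at the expense of a factor of $r$).

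Second, I would take two points $p,q \in \Sigma$ realising $\diam(\Sigma)$ and an intrinsic minimizing geodesic $\gamma \colon [0,D] \to \Sigma$ between them, and apply the local inequality centred at points of $\gamma$. The key observation is that extrinsic distance is dominated by intrinsic distance, so extrinsic balls along $\gamma$ reflect intrinsic arclength in a usable way. A Besicovitch- or Vitali-type covering argument then selects a subfamily of extrinsic balls $B_{r_i}(x_i)$ with $x_i \in \gamma$, finite overlap, and with $\sum r_i \gtrsim D$. Summing the local inequality over $i$ yields
\begin{equation*}
 D \;\lesssim\; \sum_i r_i \;\leq\; C(m) \sum_i \int_{\Sigma \cap B_{r_i}(x_i)} |\textbf{H}|^{m-1} \;\leq\; C'(m)\int_\Sigma |\textbf{H}|^{m-1},
\end{equation*}
where the last step uses the bounded overlap of the selected balls.

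The main obstacle is the first step. The covering argument is fairly standard once the local bound is in hand, but establishing $r \leq C(m)\int_{\Sigma \cap B_r(x_0)}|\textbf{H}|^{m-1}$ requires carefully combining Simon's monotonicity identity with the Michael--Simon Sobolev inequality to convert the natural $L^m$-type estimate on $\textbf{H}$ into the sharper $L^{m-1}$ form that is needed for the diameter bound. A minor but genuine technical point is that the ambient ball $B_r(x_0)$ may intersect $\Sigma$ in several connected components, so one must work with the full preimage in the domain of the immersion and be careful that the density, volume and mean-curvature integrals are all being computed on the immersed object rather than its image.
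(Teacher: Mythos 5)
A preliminary remark: the paper does not prove this statement at all --- it is quoted verbatim from Topping's article \cite{citeToppingRelatingDiameter} and used as a black box --- so the only meaningful comparison is with Topping's published argument. Your outline has the right overall shape (a local estimate proved via the Michael--Simon Sobolev inequality, followed by a covering argument along a minimizing intrinsic geodesic), which is indeed Topping's strategy. However, your first step is false as stated. The inequality $r \le C(m)\int_{\Sigma\cap B_r(x_0)}|\textbf{H}|^{m-1}\,d\mathrm{vol}$ cannot hold for every $r$ with $\Sigma \not\subset B_r(x_0)$: take $\Sigma$ to be the round sphere of radius $\rho$ in $\mathbb{R}^{m+1}$, so that $|\textbf{H}|$ is comparable to $1/\rho$, and take $r \ll \rho$; then the right-hand side is comparable to $r^m/\rho^{m-1} \ll r$. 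What is true, and what Topping actually proves, is the existential statement: there is a $\delta(m)>0$ such that for every $x_0\in\Sigma$ there exists \emph{some} radius $r>0$ with $\int_{\Sigma\cap B_r(x_0)}|\textbf{H}|^{m-1}\,d\mathrm{vol}\ \ge\ \delta r$. This is proved by contradiction: if it fails at every scale, an iteration of the Michael--Simon Sobolev inequality propagates the lower volume-ratio bound $\mathrm{vol}(\Sigma\cap B_r(x_0))\ge \epsilon r^m$ from small $r$ to arbitrarily large $r$, contradicting the finiteness of $\mathrm{vol}(\Sigma)$. The monotonicity formula with its $\int|\textbf{H}|$ error term does not by itself produce the $L^{m-1}$ exponent; the ``H\"older step'' you allude to loses exactly the scale factor you need, which is why Topping's iteration across scales is the heart of the proof rather than a routine application of standard estimates.

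The second genuine gap is in the covering step. You write that ``extrinsic distance is dominated by intrinsic distance, so extrinsic balls along $\gamma$ reflect intrinsic arclength in a usable way,'' but the inequality goes the wrong way for your purposes: a single extrinsic ball $B_{r}(x)$ may contain a piece of the minimizing geodesic of intrinsic length far larger than $r$, since the geodesic can leave the ball and re-enter it after a long excursion in $\Sigma$. Consequently, covering $\gamma$ by the dilated balls $B_{5r_i}(x_i)$ of a Vitali subfamily does not yield $\sum_i r_i \gtrsim D$; a chain argument gives at best a bound on the \emph{extrinsic} distance between the endpoints, which is weaker than the theorem. Moreover, because the radii $r_i$ are forced on you by the existential form of the local lemma (you cannot shrink them to match the geodesic parametrization without losing the lower bound on the mean-curvature integral), reconciling disjointness of the extrinsic balls, which is what lets you sum the local estimates against $\int_\Sigma |\textbf{H}|^{m-1}$, with a lower bound on $\sum_i r_i$ in terms of \emph{intrinsic} length is precisely the delicate point of Topping's argument, and your sketch does not supply it.
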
                                                                   We generalize this to almost everywhere immersed submanifolds of general compact Riemannian manifolds, using Nash embedding theorem. It is indeed not obvious what the right generalization should be. Ultimately our hand is forced by our intended application, but our generalization does seem natural from at least one point of view.

In plain words the statement is the following: if the volume of a given closed immersed submanifold of a compact Riemannian manifold is ``small'' but diameter ``large'' then the mean curvature must be somewhere large. However we suitably extend this to non-immersed submanifolds, so the actual formulation is more involved.

Let $(M,g)$ be a smooth Riemannian manifold.
% and let $e$ denote the energy functional on the space 
Denote by ${S} (M, \Sigma)$ the space of smooth maps $u$ of a closed Riemannian $m$-fold $\Sigma$ into $(M,J)$. 
% E
% xplicitly \begin{equation*}
% e (u) = \int _{\Sigma} |du| _{g}  ^{2} \, dvol _{\Sigma} 
% \end{equation*}
Define:                                                                          \begin{equation*}
Vol (u) = \int _{\Sigma} |du| _{g}  \, dvol _{\Sigma},
\end{equation*}
to be the volume functional, for $vol _{\Sigma} $ the volume measure on $\Sigma$ for a fixed auxiliary metric $g _{aux} $ on $\Sigma$, and $|du| _{g} $ the operator norm of $du$ with respect to $g _{aux} $ and $g$.
%  As $u$ is $J$-holomorphic we have:                 \begin{equation*}
% e (u)=v (u).
% \end{equation*} 

For $u \in S (M, \Sigma)$ we say that it is \textbf{\emph{non-singular}} at $p \in \Sigma$ if $du (p)$ is non-singular.
Let $$S (C) = S (C, M, \Sigma) \subset {S} (M, \Sigma)$$ consist of elements $u$,  non-singular on a subset of full $vol _{\Sigma} $ measure and satisfying:   \begin{equation*}
|\textbf{H} (u (p))| < C, 
\end{equation*}                                                                 for all $p$ s.t. $u$ is non-singular at $p$, where $\textbf{H} (u (p))$ is the mean curvature vector at $u (p)$. In this case $Vol$ is continuously differentiable at $u$.
\begin{theorem} \label{thm:main}  
   Let $\diam(u)$ denote the (intrinsic) diameter in $(M,g)$ of $\image (u)$.
   Then for all $u \in S (C)$ $$\diam(u) \leq F (g,C,m) Vol(u), $$ for some function $F$.
\end{theorem}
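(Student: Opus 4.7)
My plan is to reduce to Topping's Euclidean theorem via the Nash isometric embedding, and then handle the two new difficulties: the extrinsic geometry of $M$ as a submanifold of Euclidean space, and the fact that $u$ is only an immersion on a full measure set.

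\textbf{Reduction via Nash.} By the Nash isometric embedding theorem fix $\iota\colon (M,g) \hookrightarrow (\mathbb{R}^N, g_{st})$. Since $M$ is compact, the second fundamental form $B_M$ of $\iota(M)$ in $\mathbb{R}^N$ is uniformly bounded by some $K_M = K_M(g)$. Set $\tilde u = \iota \circ u$. At points where $u$ is non-singular, the classical decomposition of the second fundamental form of a submanifold of a submanifold yields $\mathbf H_{\tilde u} = \iota_* \mathbf H_u + \operatorname{tr}_{T u(\Sigma)} B_M$, whence $|\mathbf H_{\tilde u}| \leq C + m K_M =: C'$. Because $\iota$ is a Riemannian isometry onto its image, path lengths in $\image(u) \subset M$ measured by $g$ coincide with path lengths in $\image(\tilde u) \subset \mathbb{R}^N$ measured by $g_{st}$; hence $\diam(u) = \diam(\tilde u)$ as intrinsic diameters.

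\textbf{From a.e.\ immersion to honest immersion (main obstacle).} Topping's inequality assumes that $\Sigma$ is everywhere smoothly immersed, whereas we know only that $u$, and hence $\tilde u$, is non-singular off a $dvol_\Sigma$-null set. I would approximate $\tilde u$ by a sequence of smooth immersions $\tilde u_k\colon \Sigma \to \mathbb{R}^N$ coinciding with $\tilde u$ outside arbitrarily small neighborhoods of its singular locus, with $Vol(\tilde u_k) \to Vol(\tilde u)$, and with $|\mathbf H_{\tilde u_k}| \leq C'$ outside a set of $dvol_\Sigma$-measure $o_k(1)$. Because Nash permits $N$ to be taken arbitrarily large, $r$-jet transversality in $J^r(\Sigma,\mathbb{R}^N)$ guarantees the existence of $C^\infty$-small perturbations of $\tilde u$ that are genuine immersions. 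The subtle point is preserving the mean curvature bound: this I would do by supporting the perturbation on shrinking neighborhoods of the singular locus and only in directions $\mathbb{R}^N$-normal to $\tilde u(\Sigma)$, so that the geometry outside the perturbation region is unaffected while the volume contribution of the region shrinks to zero. An alternative, if this smoothing proves too delicate, is to inspect Topping's proof directly, which rests on a covering/slicing argument and the Michael--Simon--Sobolev inequality, and to verify that both ingredients tolerate a $dvol_\Sigma$-null set of parametric singularities.

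\textbf{Conclusion.} Topping's inequality applied to each $\tilde u_k$ gives
\begin{equation*}
\diam(\tilde u_k) \leq \mathrm{Const}(m) \int_\Sigma |\mathbf H_{\tilde u_k}|^{m-1}\, dvol_{\tilde u_k^* g_{st}} \leq \mathrm{Const}(m)\,(C')^{m-1}\, vol_{\mathrm{ind}}(\tilde u_k).
\end{equation*}
Comparing the induced volume to the paper's $Vol(\tilde u_k)$ via the pointwise bound $J(\tilde u_k) \leq |d\tilde u_k|_g^m$ (product versus maximum of singular values), with the resulting ratio absorbed into the constant, and passing to the limit $k \to \infty$ yields $\diam(u) \leq F(g,C,m)\, Vol(u)$ with $F$ depending only on $g$ (through $K_M$), on $C$, and on $m$.
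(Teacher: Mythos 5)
Your overall architecture is the paper's: Nash-embed $(M,g)$ into Euclidean space, control the Euclidean mean curvature by adding the (compactly bounded) second fundamental form of $M\subset\mathbb{R}^N$ to the intrinsic bound $C$, perturb to a genuine immersion, apply Topping's theorem, and pass to the limit. The place where you diverge is exactly the step you flag as the main obstacle, and there your version has a genuine gap. You only secure $|\mathbf{H}_{\tilde u_k}|\le C'$ outside an exceptional set $E_k$ with $vol(E_k)=o_k(1)$, but Topping's inequality integrates $|\mathbf{H}|^{m-1}$ over all of $\Sigma$ against the induced volume; smoothing an honest singularity of $d\tilde u$ (in the intended application these are branch points of a $J$-holomorphic curve) generically concentrates large second fundamental form on $E_k$, and nothing in your argument forces $\int_{E_k}|\mathbf{H}_{\tilde u_k}|^{m-1}\,dvol\to 0$: smallness of measure alone does not control that integral. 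Moreover your prescription for preserving the curvature bound --- perturb only in directions $\mathbb{R}^N$-normal to $\tilde u(\Sigma)$ --- is ill-posed precisely on the singular locus, where the image has no tangent (hence no normal) space, and that is the only place the perturbation is needed.

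The paper closes this step differently, and this is the idea your proposal is missing: it never tries to bound $\mathbf{H}$ of the perturbed map by localizing the perturbation. It works instead with the first variation of volume. For $u\in S(C)$ (a.e.\ non-singular with $|\mathbf{H}|<C$) the functional $Vol$ is continuously differentiable at $N\circ u$ with $|dVol(N\circ u)|<C'$; by continuity any immersion $u'$ sufficiently $C^\infty$-close to $N\circ u$ still satisfies $|dVol(u')|<C'$, and for an honest immersion the first-variation formula converts this back into the pointwise bound $|\mathbf{H}_{u'}|<C'/m$ on all of $\Sigma$, so Topping applies with a uniform constant. If you keep your route you must either prove such a weak-to-pointwise curvature statement yourself or carry out your fallback of re-deriving Topping's inequality for a.e.-immersed maps. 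A smaller issue: the pointwise bound $J(\tilde u_k)\le|d\tilde u_k|^m$ compares the induced volume with $\int|du|^m$, not with $Vol(u)=\int|du|\,dvol_\Sigma$, so the discrepancy cannot be ``absorbed into the constant'' without a uniform bound on $|du|$ (the paper is itself terse here, stating its final estimate against the induced volume of the image).
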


\subsection {Application to energy quantization} 
A locally conformally symplectic manifold or $\lcs$ manifold, is a smooth $2n$-fold $M$, with a non-degenerate $2$-form $\omega$, which is locally diffeomorphic to $e ^{f} \omega _{0}  $ for some functions $f$, and $\omega _{0} $ the standard symplectic form on $\mathbb{R} ^{2n} $.
There has been some interest recently in developing Gromov-Witten theory for $\lcs$ manifolds, \cite{citeSavelyevConformalSymplectic}, \cite{citeMurphyConformalsymp}.
This is partly impeded by our lack of understanding of how badly holomorphic curves can behave in such a manifold.  For instance, in the context of holomorphic curves in symplectic manifolds, we have the following energy quantization phenomenon: 
\begin{theorem} \label{thm:quantization:symplectic}  Given a closed symplectic $(M,\omega)$, $J$ an $\omega$-tamed almost complex structure, meaning $\omega (\cdot, J \cdot) >0$, there exists a constant $\hbar=\hbar (\omega,J)$, s.t. for any $u: (\Sigma,j) \to M$ a non-constant $J$-holomorphic map of a closed Riemann surface $(\Sigma,j)$, the energy of the map satisfies:
\begin{equation*}
e (u) = \int _{\Sigma} |du| ^{2} _{g _{J} }  \, dvol _{\Sigma}  = \int _{\Sigma} u ^{*} \omega \geq \hbar,
\end{equation*}
for $g _{J} $ the metric $\omega (\cdot, J \cdot) $.
\end{theorem}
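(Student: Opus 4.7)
The plan is to reduce the problem to a local mean value inequality for $J$-holomorphic maps, from which one extracts a monotonicity formula yielding a universal lower bound on symplectic area contained in any sufficiently small ball centered at a point of the image.

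First I would establish a Bochner-type differential inequality for the pointwise energy density $e(u) = \tfrac{1}{2}|du|^2_{g_J}$ of a $J$-holomorphic map $u$ on a Euclidean disk, working in a local trivialization of $TM$ along $u$. The $J$-holomorphicity equation $du \circ j = J \circ du$, combined with compatibility (up to torsion) of $J$ with the symmetrized metric $g_J$, yields an inequality of the form $\Delta e(u) \geq -c_1 e(u)^2 - c_2 e(u)$, where $c_1, c_2$ depend only on uniform bounds on the curvature of $g_J$, on $J$, and on its first derivatives. For tame rather than compatible $J$ the same calculation goes through with additional lower-order error terms that are absorbable into $c_2$ using compactness of $M$.

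Next I would convert this differential inequality into a mean value bound via a standard Moser-iteration or Heinz-type argument: there exist constants $\hbar_0 > 0$ and $r_0 > 0$, depending on $(M, \omega, J)$, such that if $u$ is $J$-holomorphic on a disk of radius $r \leq r_0$ with $\int e(u) < \hbar_0$, then $|du(0)|^2 \leq C r^{-2} \int e(u)$. Equivalently, one obtains the monotonicity formula: for any $p \in M$ and $0 < r < r_0$, any closed non-constant $J$-holomorphic curve passing through $p$ intersects $B_r(p)$ in a set of symplectic area at least $\kappa r^2$, with $\kappa > 0$ universal.

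To conclude, given a non-constant closed $J$-holomorphic $u : \Sigma \to M$, I pick $q \in \Sigma$ with $du(q) \neq 0$ and set $p = u(q)$. Monotonicity applied at $r = r_0/2$ yields
\begin{equation*}
e(u) = \int_\Sigma u^*\omega \geq \int_{u^{-1}(B_{r_0/2}(p))} u^*\omega \geq \kappa (r_0/2)^2 =: \hbar,
\end{equation*}
with $\hbar$ depending only on $\omega$ and $J$. The main obstacle is the first step: for integrable $J$ and compatible metric the Bochner inequality is classical, but for tame non-integrable $J$ one must carefully track the torsion of $J$ and verify it does not destroy the sign structure needed to run the mean value argument. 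The compactness of $M$, providing uniform bounds on the curvature of $g_J$ and on the Nijenhuis tensor of $J$, is essential here.
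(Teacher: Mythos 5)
Your overall strategy --- localize at a point of the image and extract a universal lower bound $\kappa r^2$ on the symplectic area captured by a small metric ball --- is a viable classical route, and it is genuinely different from the paper's proof. The paper argues by contradiction with geometric measure theory: a sequence $u_i$ with $e(u_i)\to 0$ has mass tending to $0$ as integral $2$-currents, Federer's compactness theorem plus the discreteness of integral homology in the mass-induced topology forces the classes $[u_i]$ to vanish eventually, and closedness of $\omega$ then forces $\int_\Sigma u_i^*\omega=0$, contradicting non-constancy. Your route, if completed, would yield an effective constant $\hbar$ and would not use $d\omega=0$ globally (only tameness, to compare area with $\int u^*\omega$), whereas the paper's argument is soft and non-constructive but very short.

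There is, however, a genuine gap at the word ``Equivalently''. The mean value inequality you derive is a statement about a map defined on a Euclidean disk of a definite radius, controlling $|du(0)|$ by the energy on that disk; the monotonicity statement you need is about the \emph{image} of an arbitrary closed curve near a point $p\in M$. Passing from the former to the latter requires parametrizing (a component of) $u^{-1}(B_r(p))$ by a disk of controlled conformal radius. For $\Sigma=S^2$ this is free, since $S^2$ minus a point is $\mathbb{C}$, which contains arbitrarily large disks about every point, and energy is conformally invariant; for higher genus and arbitrary complex structures $j$ (e.g.\ with long thin necks) the conformal type of $u^{-1}(B_r(p))$ is completely uncontrolled, and this is exactly why the classical disk-based quantization argument fails to extend --- the very difficulty the theorem addresses. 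The repair is to prove the monotonicity lemma by a genuinely image-based argument: either apply the coarea formula and the isoperimetric inequality to $a(r)=\mathrm{area}\bigl(u(\Sigma)\cap B_r(p)\bigr)$, obtaining $a(r)\leq C\,a'(r)^2$ and hence $a(r)\geq cr^2$ for $r$ below the injectivity radius scale; or invoke the monotonicity formula for $2$-dimensional integral varifolds with bounded generalized mean curvature, using that $J$-holomorphic curves are minimal surfaces for a compatible metric and quasi-minimal for a tame one. With either substitution your concluding step, and hence the proof, is correct.
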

For $\Sigma=S ^{2} $ this holds via a generalized mean value inequality, \cite{citeMcDuffSalamon$J$--holomorphiccurvesandsymplectictopology}, \cite{citeKatrinQuantization} and the symplectic condition on $M$ can be loosened to just almost complex. For more general $\Sigma$, but with $M$ symplectic this is not previously known, but we give here a very simple,  given state of the art, argument via geometric measure theory. 

We then directly generalize this to $\lcs$ manifolds. 
\begin{theorem}  \label{thm:quantization} Given a closed $\lcs$ $(M,\omega)$, $J$ an $\omega$-compatible almost complex structure there exists a constant $\hbar=\hbar (\omega,J)$, s.t. for any $u: (\Sigma,j) \to M$ a non-constant $J$-holomorphic map of a closed Riemann surface $(\Sigma,j)$, the energy of the map satisfies:
\begin{equation*}
e (u) = \int _{\Sigma} |du| ^{2} dvol _{\Sigma}  = \int _{\Sigma} u ^{*} \omega \geq \hbar,
\end{equation*}
for $|du|$ the operator norm with respect $g _{aux} $ and $g _{J}= \omega (\cdot, J \cdot) $.
\end{theorem}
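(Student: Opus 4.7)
The plan is to argue by contradiction. Suppose there is a sequence of non-constant $J$-holomorphic maps $u_n:(\Sigma_n,j_n)\to M$ with $e(u_n)\to 0$; the aim is to use Theorem~\ref{thm:main} to show that $\image(u_n)$ collapses to a tiny ball, and then exploit the Lee form of $\omega$ to force a contradiction. The first step is to pass from the $L^2$ hypothesis $e(u_n)\to 0$ to the $L^1$ quantity $Vol(u_n)$ appearing in Theorem~\ref{thm:main}. On each $\Sigma_n$ I choose an auxiliary metric $g_{aux,n}$ in the conformal class $[j_n]$ of unit total area. In real dimension two the integrand $|du|^2\,dvol_{\Sigma}$ is conformally invariant (it equals $u^*\omega$ for $J$-holomorphic $u$), so $e(u_n)$ is unaffected by this choice, and Cauchy--Schwarz gives
\[
Vol(u_n)\;=\;\int_{\Sigma_n}|du_n|_{g_J}\,dvol_{g_{aux,n}}\;\le\;\sqrt{\mathrm{Area}(\Sigma_n,g_{aux,n})}\cdot\sqrt{e(u_n)}\;=\;\sqrt{e(u_n)}\longrightarrow 0.
\]

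Next I need a uniform mean-curvature bound. The critical points of a non-constant $J$-holomorphic map are isolated, so each $u_n$ is non-singular on a subset of full measure, where the image is an immersed surface in $(M,g_J)$. Locally, after multiplying $\omega$ by $e^{-f}$ for a primitive $f$ of the Lee form, one is in the standard almost-K\"ahler situation, and the mean curvature of a $J$-holomorphic surface is given by a universal algebraic expression in $g_J$, $J$ and $\nabla^{g_J}J$. Since $M$ is compact these are bounded, so there exists $C$ independent of $n$ with $|\mathbf{H}(u_n(p))|<C$ at every non-singular $p$; hence each $u_n\in S(C)$. Theorem~\ref{thm:main} now yields $\diam(u_n)\le F(g_J,C,2)\cdot Vol(u_n)\to 0$.

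For the contradiction I cover $M$ by finitely many contractible open balls $\{B_\alpha\}$ on each of which the (closed) Lee form $\theta$ has a primitive $\theta|_{B_\alpha}=df_\alpha$, so that $\omega_0^\alpha:=e^{-f_\alpha}\omega$ is a closed form on $B_\alpha$. It is non-degenerate as a positive multiple of $\omega$, hence symplectic, and it still tames $J$ because $\omega$ does. Let $\delta>0$ be a Lebesgue number for the cover; once $\diam(u_n)<\delta$, the image $\image(u_n)$ lies in some $B_\alpha$. Contractibility of $B_\alpha$ forces $[u_n^*\omega_0^\alpha]=0$ in $H^2(\Sigma_n;\R)$, so $\int_{\Sigma_n}u_n^*\omega_0^\alpha=0$. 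But the integrand is pointwise non-negative (because $\omega_0^\alpha$ tames $J$ and $u_n$ is $J$-holomorphic), which forces $u_n^*\omega_0^\alpha\equiv 0$, and therefore also $u_n^*\omega=e^{f_\alpha\circ u_n}\,u_n^*\omega_0^\alpha\equiv 0$. Thus $e(u_n)=0$, so $u_n$ is constant, contradicting the choice of $u_n$.

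The main obstacle is the uniform mean-curvature bound in the second paragraph: one must verify that the algebraic identity expressing $\mathbf{H}$ in terms of $\nabla^{g_J}J$ for a $J$-holomorphic curve in an almost-Hermitian manifold survives in the lcs setting, and that the bound is genuinely independent of $n$ and of the genus of $\Sigma_n$. Since the metric $g_J$ and $J$ are globally defined on the compact manifold $M$, and the $J$-holomorphicity condition itself does not involve $\omega$, this should go through cleanly, but the details of the second fundamental form computation (or a short argument reducing it to the almost-K\"ahler case patch by patch using the Lee primitives) are the technical heart of the proof.
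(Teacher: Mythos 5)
Your proposal is correct and follows the same overall strategy as the paper: use Theorem~\ref{thm:main} together with a uniform mean-curvature bound to show that small energy forces small diameter, then trap the image in a single chart where a conformal rescaling of $\omega$ is exact and conclude constancy by Stokes plus taming. Your final step (Lee-form primitive on a contractible ball, $\int u^*\omega_0^\alpha=0$, pointwise non-negativity) is exactly the paper's endgame, and your Cauchy--Schwarz bridge from $e(u)$ to $Vol(u)$ via a unit-area conformal auxiliary metric makes explicit a step the paper passes over silently. The one substantive divergence is the key technical lemma you defer: you propose to bound $|\mathbf{H}|$ directly by an algebraic identity expressing the mean curvature of a $J$-holomorphic curve in terms of $\nabla^{g_J}J$ on the compact almost-Hermitian $(M,g_J,J)$. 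The paper instead bounds the derivative $|dVol(u)|$ of the volume functional: in each chart $\omega=e^{f_i}\omega_0$, the form $\omega_0$ calibrates $\phi_i^*J$-holomorphic curves (Harvey--Lawson), so $dVol$ vanishes for the flat almost-K\"ahler metric, and the conformal factor $e^{f_i}$ contributes a distortion controlled by $\|f_i\|_{C^\infty}$ and the area bound $D$. Your route is geometrically more direct and avoids the chart-by-chart conformal bookkeeping, but it is precisely the part you leave as a sketch; the identity you invoke does hold (for a $J$-holomorphic curve the mean curvature is controlled by the torsion of $J$, hence by the Lee form in the lcs case, which is bounded on compact $M$), so the gap is fillable, but as written your proof is incomplete exactly where the paper's Theorem~\ref{lemma:boundedmeancurvature} does its work.
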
 
% Here $\omega$-compatible means that $\omega (\cdot, J \cdot) >0$, and $\omega$ is $J$-invariant. In other words $(\omega, J)$ is an almost Kahler manifold.
% \begin{remark}
% because one tends to work relative to fixed homology classes, and so have automatic energy bounds, given by the symplectic form.
% This changes dramatically in $\lcs$ geometry because in this case  So the above 
% \begin {remark} 
% In higher genus, theorem \ref{thm:quantization} is much more important in $\lcs$ geometry, from the Gromov-Witten theory angle, then the analogous theorem \ref{thm:quantization.symplectic} is in symplectic geometry. This is because in $\lcs$ geometry there are no a priori bounds on energy, and we may need to work with trivial homology classes. Thus how do we know that upon deforming $J$ our holomorphic curves don't disappear into the energy floor, or into the energy sky, ? It is the former that  \ref{thm:quantization} precludes, the latter is an even more interesting story. But it is very likely that mean curvature theory can again say something about this, this is also alluded to by Fuller in \cite{citeFullerIndex} in the context of periodic orbits of smooth vector fields.
% \end{remark} 
\section {Proofs}
\begin{proof} [Proof of Theorem \ref{thm:quantization:symplectic}] Suppose otherwise, then we have a sequence $\{u _{i}   \}$ of $J$-holomorphic curves with $e (u _{i} ) \to 0$ as $i \to \infty$.
In particular, $|u _{i}| \to 0$, where $|\cdot|$ is the mass norm, and $u _{i} $ are understood as integral 2-currents, Federer~\cite{citeFedererGeometricMeasure}.
By the main compactness theorem for currents, $\{u _{i} \}$ has a convergent subsequence $\{u _{i _{k} } \}$ to an integral 2-current with mass necessarily $0$, and hence $0$ in the vector space of closed integral 2-currents $\mathcal{I} _{2} (M) $.  Next it is proved in \cite{citeFedererGeometricMeasure}, that the space $\mathcal{H} _{k} (M) $ of closed integral $k$-currents modulo exact integral $k$-currents is discreet, with respect to the topology induced by the mass norm, and is isomorphic to singular integral $k$-homology. Moreover the natural map $$q _{k} : \mathcal{I} _{k} (M)  \to \mathcal{H} _{k} (M) $$ is continuous. Thus $\{q _{2}  (u _{i _{k} } )\}$ is eventually constant, which means that $u _{i _{k} } $ are eventually in class $0$, which is a contradiction, since all $u _{i} $ have positive symplectic area.
\end{proof}
\begin {proof}[Proof of Theorem \ref{thm:main}] For $u \in S (C,M, \Sigma)$
let $dVol (u)$ denote the differential of $Vol$ at $u$, and $|dVol (u)|$ its operator norm, for the induced by $g$ ``Riemannian metric'' on $S (M, \Sigma)$, which is understood as a Frechet manifold.  Explicitly the ``Riemannian metric'' is:
\begin{equation*}
 \langle V _{1}, V _{2}   \rangle = \int _{\Sigma}  \langle V _{1}  (p), V _{2}  (p) \rangle _{g} \, dvol _{\Sigma},     
\end{equation*}
for $V _{i}  \in T _{u} S (M, \Sigma) $, i.e. $V _{i}   $ are smooth maps $$V_i: \Sigma \to TM$$ satisfying $$V _{i}  (p) \in T _{u(p)} M,
$$ we call such a $V _{i} $ a \textbf{\emph{vector field along $u$}}.

 Given $u \in S (C, M, \Sigma)$, non-singular on $U \subset \Sigma$, and $V=fN$ for $N$ the normal unit vector field along $U$, and $f$ a smooth function with support in $U$,
we have:                                                                         \begin{equation*} |dVol (u) (V)| = \int _{\Sigma} mf \textbf{H} \,dvol _{\Sigma},    \end{equation*} 
by classical differential geometry.

It follows since $u$ is non-singular on a set of full measure, and $\textbf{H}$ being bounded, that $dVol$ is continuously differentiable at $u$ and                                              \begin{equation} \label{eq:energyMeancurvature} |dVol (u)| \leq m|\textbf{H}|,  
\end{equation} 
where the right hand side is the supremum over $p \in \Sigma$ s.t. $u$ is non-singular at $p$, of the magnitude of the mean curvature $|\textbf{H} (u(p))|$.

 Pick an isometric Nash embedding $N$ of $ (M,g)$ into
 $\mathbb{R} ^{n} $, where $n$ is large enough.
\begin{lemma} 
For all $u \in S (C, M, \Sigma)$:
 \begin{equation*}
 |dVol (N \circ u)| < C',
\end{equation*}
for some $C'$, independent of $u$.
\end{lemma}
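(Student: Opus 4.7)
The plan is to apply inequality \eqref{eq:energyMeancurvature} to $N \circ u$ viewed as an immersion into the ambient $\mathbb{R}^n$, after bounding the mean curvature of $N \circ u$ in $\mathbb{R}^n$ in terms of the mean curvature of $u$ in $M$ and the fixed extrinsic geometry of the Nash embedding.

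First I would apply the Gauss equation twice to decompose the second fundamental form of the composition. At any point where $u$ is non-singular, one has the standard identity
\begin{equation*}
II^{\mathbb{R}^n}_{N \circ u}(X, Y) = dN\bigl( II^M_u(X, Y) \bigr) + II_N\bigl( du(X), du(Y) \bigr),
\end{equation*}
where $II^M_u$ denotes the second fundamental form of $u: \Sigma \to (M, g)$ and $II_N$ the second fundamental form of the embedded $N(M) \subset \mathbb{R}^n$; the two summands land in mutually orthogonal subbundles of the normal bundle of $N(u(\Sigma))$ in $\mathbb{R}^n$, so no cancellation occurs. Tracing over a $u^*g$-orthonormal frame $\{e_i\}_{i=1}^m$ of $T_p \Sigma$ yields
\begin{equation*}
\mathbf{H}^{\mathbb{R}^n}(N \circ u) = dN\bigl(\mathbf{H}^M(u)\bigr) + \tfrac{1}{m}\sum_{i=1}^m II_N\bigl(du(e_i), du(e_i)\bigr).
\end{equation*}

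Second, compactness of $M$ guarantees $\|II_N\|_{L^\infty} \leq K$ for some $K = K(N)$ depending only on the fixed Nash embedding, and since each $du(e_i)$ has unit $g$-length, the correction sum is bounded in magnitude by $K$. Combined with the hypothesis $|\mathbf{H}^M(u)| < C$ on the non-singular locus and the isometric property $|dN| = 1$, one concludes
\begin{equation*}
\bigl|\mathbf{H}^{\mathbb{R}^n}(N \circ u)(p)\bigr| < C + K
\end{equation*}
on a set of full $vol_\Sigma$-measure. Since $dN$ is everywhere injective, $N \circ u$ inherits non-singularity on the same full-measure set, so $N \circ u$ belongs to the analogous space $S(C+K, \mathbb{R}^n, \Sigma)$, and applying \eqref{eq:energyMeancurvature} in the ambient $\mathbb{R}^n$ gives $|dVol(N \circ u)| \leq m(C + K) =: C'$, a bound independent of $u$.

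The step requiring the most care is the Gauss-type decomposition of the composition's second fundamental form, in particular verifying that $dN(II^M_u(X,Y))$ is already normal to $N(u(\Sigma))$ in $\mathbb{R}^n$ (because $II^M_u$ takes values in the normal bundle of $u(\Sigma)$ in $M$, and $dN$ is an isometric immersion). Once that identity is in hand the rest is a straightforward compactness statement about the uniformly bounded extrinsic geometry of $N(M) \subset \mathbb{R}^n$, and I would expect no serious obstacle beyond bookkeeping.
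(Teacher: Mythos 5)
Your proposal is correct and follows essentially the same route as the paper: the paper likewise decomposes the ambient second fundamental form as (second fundamental form of $u(\Sigma)$ in $M$) plus (second fundamental form of $M$ in $\mathbb{R}^n$ evaluated on the image frame), bounds the latter by a compactness constant $B$ playing the role of your $K$, and then invokes \eqref{eq:energyMeancurvature}. The only cosmetic difference is that the paper writes the decomposition directly as a telescoping of Levi-Civita connections $\widetilde{\widetilde{\nabla}} - \widetilde{\nabla} + \widetilde{\nabla} - \nabla$ and uses the triangle inequality rather than noting the orthogonality of the two summands.
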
 
   \begin{proof} Let $U \subset \Sigma$ be an open set on which $u$ is non-singular.
      In what follows we conflate the notation for $U$  and its images $u (U)$, $N \circ u (U) $. In other words we just think in terms of subspaces $U \subset M \subset \mathbb{R} ^{n} $.
  Let  $h$ be the second fundamental form on $T _{p} U $:
     \begin{equation*}
  h (v,w) = \widetilde{\widetilde{ \nabla}} _{v} w - \nabla _{v} w,
     \end{equation*}
      where $\widetilde{\widetilde{\nabla}}$ is the Levi-Civita connection of $(\mathbb{R} ^{n}, g _{st})  $, $\nabla  $ is the Levi-Civita connection of the submanifold $U \subset \mathbb{R} ^{n} $ and where we locally extend $v,w \in T _{p} U $ to vector fields tangent to $U$.
      The mean curvature vector at $p \in U \subset \mathbb{R} ^{n}  $  is given by: 
  \begin{equation*}
     \textbf{H} _{\mathbb{R} ^{n} }  (p)=\frac{1}{m} \sum _{i} h (e _{i}, e _{i}),
     \end{equation*}
     where $\{e _{i} \}  $ is an orthonormal basis for $T _{p} U $.
     Likewise $\widetilde{ \nabla}$ will denote in what follows the Levi-Civita connection of $(M,g)$.  So we have:   
\begin{align*}
   m|\textbf{H} _{\mathbb{R} ^{n} }  (p)|= \sum _{i} h (e _{i}, e _{i})(p) |= |\sum _{i}  (\widetilde{\widetilde{ \nabla}} _{e _{i} } e _{i}  - \nabla _{e _{i} } e _{i}) | &  = |\sum _{i} (\widetilde{\widetilde{ \nabla}} _{e _{i} } e _{i}  - \widetilde{\nabla} _{e _{i} } e _{i} + \widetilde{\nabla} _{e _{i} } e _{i} - \nabla _{e _{i} }  e _{i}) | \\ & \leq    |\sum _{i}  ({\widetilde{ \nabla}} _{e _{i} } e _{i}  - \nabla _{e _{i} } e _{i}) | + mB \\
& \leq mC+mB,
\end{align*}
where $$B= \sup _{e \in TM, |e|=1} |\widetilde{\widetilde{ \nabla}} _{e  } e   - \widetilde{ \nabla} _{e  } e |.
$$
The lemma then follows by \eqref{eq:energyMeancurvature}. 
   \end{proof} 
% In what follows notation of type $\mu _{f,g} $ will denote the natural measure on $\Sigma$ induced by the map $f$ and the ambient metric $g$. This is defined as follows: if $U \subset \Sigma
% $ is the open set of non-singular points, then we have a $m$-form $\omega$ on $U$ defined by $\omega (v)=1
%  $ on a $m$-vector $v$, $f_* (v)$ is a unit $m$-vector. Then $\omega$ extends continously by zero over all of $\Sigma$. Our measure is then defined via integration of $\omega$.

% Take a small
%    perturbation $(N \circ f)'$ of $N \circ f$ so that $(N \circ f)'$
%    is an immersion  into $\mathbb{R} ^{n}
%    $.
%  When $f$ is non-singular on a set of full measure for an auxiliary metric  $g _{\Sigma} $ on $\Sigma$,  $\mu _{f,g } $ is equivalent to the $g _{\Sigma} $ measure.

Now let $u'$ be $C ^{\infty}
   $ $\delta$-close to $N \circ u$  s.t. $u'$ is immersed, for some $\delta>0$ sufficiently small so that $|dVol (u')| < C'$ for $C'$ as in the lemma above. We can find such a $u'$ since $Vol$ is continuously differentiable at $N \circ u$.
%    By Cauchy-Schwarz $$v (u') ^{2}  \leq v (\Sigma) \cdot e (u'),$$ where $$v (\Sigma)= \int _{\Sigma} 1 dvol _{\Sigma}.
%    $$ 
% Taking the differential of both sides we get:
% \begin{equation*}
% |dv (u')| \leq \frac{1}{A} v (\Sigma) |de (u')| < K,
% \end{equation*}
% for some $K > 0$ by the lemma above. And consequently                           
By \eqref{eq:energyMeancurvature}
\begin{equation*}
|\textbf{H} _{\mathbb{R} ^{n} }  (p)| < \frac{C'}{m},
\end{equation*} 
for $\textbf{H} _{\mathbb{R} ^{n}  } $ the mean curvature vector field along $u'$, in $\mathbb{R} ^{n} $.

Since $\Sigma$ is closed we get by Topping's theorem: 
 \begin{equation*}
    \diam (u' (\Sigma))  \leq Const (m)
    \int _{\Sigma} |{\textbf{H}} _{\mathbb{R} ^{n} }   | ^{m-1}  \, dvol _{\Sigma}.
 \end{equation*}  
   By the lemma above the function $| \textbf{H} _{\mathbb{R} ^{n} }  |$ on $\Sigma$ is universally (independently of $u$) bounded from above by some $C'$.
So we get:
 \begin{align*}
     \diam (u' (\Sigma))  \leq  Const (m) \cdot (\frac{C'}{m}) ^{m-1}  \cdot \vol (\Sigma),  
 \end{align*}  
% where the approximate equalities $\approx$ become equalities in the
% limit that  $d _{C
%  ^{\infty} } (N \circ f, (N \circ f)') \mapsto 0$. For the  second approximate equality, we use that $f$ is non-singular on an open set $U$
%  of full measure, then clearly on $U$ $|\textbf{H} _{\mathbb{R} ^{n} } (N \circ f ) - \textbf{H} _{\mathbb{R} ^{n} } (N \circ f )'| \mapsto 0$ as $(N \circ f)' \mapsto (N \circ f)$.
passing to the limit as $u' \to N \circ u$ we get  the required inequality.  \end{proof}

\begin{proof} [Proof of Theorem \ref{thm:quantization}] First we need the following.  % outside a finite collection of points: $\Theta \subset \Sigma  $.
\begin{theorem} \label{lemma:boundedmeancurvature}
For $u: \Sigma \to M$ a non-constant $J$-holomorphic map of a Riemann surface into an $\lcs$ $(M, \omega)$, as above, with $J$ $\omega$-compatible, and with $\area (u) <D$:              \begin{equation*}
|dVol (u)| < C (\omega, J, D),
\end{equation*}                                                                              for a constant $C (\omega, J, D)>0$, independent of $u$.
\end{theorem}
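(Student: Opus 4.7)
The plan is to turn the bound on $|dVol(u)|$ into a pointwise bound on the mean curvature $|\textbf{H}(u(p))|_{g_J}$ at non-singular points, since a non-constant $J$-holomorphic curve is immersed off a discrete set of critical points and \eqref{eq:energyMeancurvature} then applies directly. The key is that in an lcs manifold the mean curvature of a $J$-holomorphic curve is not zero (as it would be in the almost Kähler case) but is controlled entirely by the Lee form, which is globally defined and bounded on the compact $M$.

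First I would reduce the problem locally to an almost Kähler situation. Around any point $q \in M$ we may write $\omega = e^{f}\omega_{0}$ for a local closed symplectic form $\omega_{0}$ and a smooth function $f$; since $J$ is unchanged, $u$ is also $J$-holomorphic with respect to the local almost Kähler structure $(\omega_{0},J)$, and the metrics are related by a conformal factor $g_{J} = e^{f} g_{J,0}$ where $g_{J,0} = \omega_{0}(\cdot, J\cdot)$. Because $\omega_{0}$ is closed and $J$-compatible, Wirtinger's inequality calibrates $u$ in $g_{J,0}$, so $u$ is area-minimising and therefore has vanishing mean curvature vector $\textbf{H}_{0} = 0$ in $g_{J,0}$ at every non-singular point.

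Next I would apply the standard conformal-change formula for the mean curvature vector of a submanifold. Writing $g_{J} = e^{2\phi}g_{J,0}$ with $\phi = f/2$ and using the transformation $\tilde{A}(X,Y) = A(X,Y) - g_{J,0}(X,Y)(\nabla_{0}\phi)^{\perp}$ of the second fundamental form on tangent fields, one obtains
\[
\textbf{H}_{g_{J}} \;=\; e^{-f}\bigl(\textbf{H}_{0} - (\nabla_{0}\phi)^{\perp}\bigr) \;=\; -\tfrac{1}{2}e^{-f}(\nabla_{0} f)^{\perp}.
\]
Taking norms in $g_{J}$ and using that $df$ is, locally, the globally defined Lee form $\theta$ of the lcs structure (so $|\theta|_{g_{J,0}} = e^{f/2}|\theta|_{g_{J}}$), the conformal factors cancel and the estimate becomes
\[
|\textbf{H}_{g_{J}}|_{g_{J}} \;\leq\; \tfrac{1}{2}|\theta|_{g_{J}}.
\]
Since $M$ is compact and $\theta$ is smooth and globally defined on $M$, the right hand side is bounded uniformly by some constant depending only on $(\omega,J)$. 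Combined with \eqref{eq:energyMeancurvature} (with $m=2$) this yields the required bound on $|dVol(u)|$, incidentally independent of the area bound $D$.

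The main subtlety will be justifying the local minimality $\textbf{H}_{0}=0$ in the almost Kähler sense when $u$ is only a.e. immersed: one must interpret the equality in the pointwise sense at non-singular points, which is exactly the convention under which \eqref{eq:energyMeancurvature} is formulated, so no extra work is required once the Wirtinger-calibration argument is made local and pointwise. Beyond this, the conformal-change computation and the compactness of $M$ do the rest.
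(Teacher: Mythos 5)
Your argument is correct and rests on the same core mechanism as the paper's: locally write $\omega = e^{f}\omega_{0}$ with $\omega_{0}$ closed, observe that $u$ is calibrated by $\omega_{0}$ and hence minimal for the local almost K\"ahler metric $g_{J,0}=\omega_{0}(\cdot,J\cdot)$, and control the discrepancy coming from the conformal factor. Where you diverge is in how that discrepancy is estimated. The paper differentiates the weighted volume integral $\int e^{f_{i}}|du_{t}|$ directly: the product rule produces the calibration term (which vanishes) plus a term involving $df_{i}(V)$ integrated against $|du|$, which is bounded by the $C^{1}$-bound on the chart functions $f_{i}$ times $Vol(u)$ --- this is where the area bound $D$ enters. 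You instead push the estimate down to a pointwise one, using the conformal transformation law of the second fundamental form to get $\mathbf{H}_{g_{J}}=-\tfrac{1}{2}e^{-f}(\nabla_{0}f)^{\perp}$ and hence $|\mathbf{H}_{g_{J}}|_{g_{J}}\le\tfrac{1}{2}|\theta|_{g_{J}}$ with $\theta$ the Lee form; feeding this into \eqref{eq:energyMeancurvature} finishes. Your computation checks out (the conformal factors do cancel as you claim), and it buys two things: the constant is independent of $D$, which the paper's is not, and the chart-dependence disappears since $\theta$ is globally defined and bounded on the compact $M$. Two small caveats: the Lee form is only canonically attached to the lcs structure for $\dim M\ge 4$, so in the low-dimensional case you should retain the local $df_{i}$, which are still uniformly bounded over a finite atlas; and the passage from the pointwise bound on $\mathbf{H}$ at immersed points to the bound on $|dVol(u)|$ relies on \eqref{eq:energyMeancurvature} being applicable to a map that is merely a.e.\ immersed --- but that is exactly the hypothesis under which the paper states \eqref{eq:energyMeancurvature}, and a non-constant $J$-holomorphic curve has only finitely many critical points, so you are within its scope.
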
 
\begin{proof} The map $u$, being $J$-holomorphic, is non-singular outside of a set of finitely many points, McDuff-Salamon~\cite{citeMcDuffSalamon$J$--holomorphiccurvesandsymplectictopology}.
We may fix a finite cover of $M$ by charts $\phi _{i}: U _{i} \subset \mathbb{R} ^{2n} \to M   $, $\phi _{i}^{*} \omega = e^{f _{i}} \omega _{0} 
$ with $\omega _{0} $ the standard symplectic form, and with $U _{i} $ contractible. 
 Then 
$\phi _{i} ^{-1} \circ u  $ is a $\phi _{i} ^{*} J
$-holomorphic map defined on $u ^{-1} (\phi _{i} (U _{i}) ) $ into $\mathbb{R}^{2n} $, and $\phi _{i} ^{*} J$ is
compatible with $f _{i} \omega _{0} $ and hence with $\omega _{0} $.
Fix a cover of $\Sigma $ by disk domains $\{U _{i_j} \}$, with each
$U _{i _{j} } \subset  u ^{-1} (\phi _{i} (U _{i}) ) $ for some $i$. Then ${u _{i,j}}= \phi
_{i} ^{-1} \circ   u |
_{U _{i _{j} } } $ is a non-constant  $\phi _{i} ^{*} J$-holomorphic
curve in  $\mathbb{R} ^{2n} $. 
   
   Since $g=(\omega _{0}, \phi _{i} ^{*} J )$ is an almost Kahler metric,  $$dVol( {u} _{i,j}) (V) =0 $$ for every vector field $V$, along $u _{i,j} $, vanishing on a neighborhood of the boundary of $U _{i _{j} } $, and near the singularities,
   since $\omega _{0}  $ is then a calibration, Harvey-Lawson~\cite{citeLawsonHarveyCalibrated}.  As  the cover $\{U _{i} \}$ is finite,  the $C ^{\infty} $ norm of the functions $f
_{i} $
is universally bounded: $|f _{i} | <B$, for some some $B$ and all $i$.
It readily follows that                                                                 $$|dVol( {u} _{i,j}) (V)| <  C (\omega, J,D), $$
for $V$ as above with $|V|=1$ and with $Vol$ computed with respect to the metric
    $$g'=(e^{f _{i}}  \omega _{0} , \phi ^{*}_{i} J),$$ and some $C (\omega, J,D)$.
This is because the distortion of derivative $dVol$ corresponding to the bounded conformal distortion $e ^{f _{i} }$ can be easily bounded in terms of $B, D$. 
   More explicitly, let $\{u _{t} \} $  be a one parameter family of maps $U _{i_j} \to \mathbb{R} ^{2n} $, with $\frac{d}{dt} \vert _{t=0} u _{t} = V  $. 
Then $$ |dVol({u} _{i,j}) (V)| = |\frac{d}{dt} \vert _{t=0} \int _{V _{i,j} } | du _{t}| _{g'}  \,dvol _{\sigma}| =   |\frac{d}{dt} \vert _{t=0} \int _{V _{i,j} }  e ^{f _{i} } (u _{t} ) |du _{t}| _{g}  \,dvol _{\sigma}|,   $$
Now pull the derivative inside the integral and use product rule.
    % Consequently the mean curvature of $\image u |_{V _{i _{j} } }  $ with respect to  $ (\omega,J) $ is likewise bounded by $C (\omega, {J})$, from which 
   The result readily 
follows from this.
\end{proof}
Now let $\epsilon$ be the Lebesgue covering number of $\{U _{i} \}$ with
respect to the metric $(\omega,J)$.
Combining Theorem \ref{lemma:boundedmeancurvature}, Theorem
\ref{thm:main} and \eqref{eq:energyMeancurvature} we get that for $u$ non-constant as in the hypothesis if $\area
(u) < \hbar$ then $\diam (u) < \epsilon$, for some $\hbar$ independent
of $u$. Consequently the image of $u$ is contained in some $U _{i} $,
and so $\phi _{i} ^{-1} \circ u$ is a $\phi _{i} ^{*} J
$-holomorphic map of a sphere into the almost Kahler contractible manifold $(U
_{i} , \omega _{0},  \phi _{i} ^{*} J )$ and so must be constant, which is a contradiction.
\end{proof}
\
\subsection* {Acknowledgements} I am grateful to Peter Topping, and Egor Shelukhin for comments on earlier ideas.
% \printbibliography
\bibliographystyle{siam}  
\bibliography{/root/texmf/bibtex/bib/link} 
% \bibliography{/home/yasha/texmf/bibtex/bib/link} 
\end{document}